\newtheorem{theorem}{Theorem}[section]
\newtheorem{lem}[theorem]{Lemma}
\newtheorem{coro}[theorem]{Corollary}
\theoremstyle{definition}
\theoremstyle{definition}
\newtheorem{remark*}{Remark}
\def\00{\mathbf{0}}
\def\Cay{\hbox{\rm Cay}}
\def\Ga{\Gamma}
\begin{document}

\title{A note on regular sets in Cayley graphs}
\author[a]{Junyang Zhang}
\author[b]{Yanhong Zhu}
\affil[a]{\small School of Mathematical Sciences, Chongqing Normal University

Chongqing 401331, People's Republic of China}
\affil[b]{\small  School of Mathematical Sciences, Liaocheng University

Liaocheng 252000, People's Republic of China}
\date{}

\openup 0.5\jot
\maketitle

\renewcommand{\thefootnote}{\fnsymbol{footnote}}
\footnotetext{E-mail address: jyzhang@cqnu.edu.cn (Junyang Zhang); zhuyanhong911@163.com (Yanhong Zhu)}

\begin{abstract}
A subset $R$ of the vertex set of a graph $\Ga$ is said to be $(\kappa,\tau)$-regular if $R$ induces a $\kappa$-regular subgraph and every vertex outside $R$ is adjacent to exactly $\tau$ vertices in $R$. In particular, if $R$ is a $(\kappa,\tau)$-regular set of some Cayley graph on a finite group $G$, then $R$ is called a $(\kappa,\tau)$-regular set of $G$. Let $H$ be a non-trivial normal subgroup of $G$, and $\kappa$ and $\tau$ a pair of integers satisfying $0\leq\kappa\leq|H|-1$, $1\leq\tau\leq|H|$ and $\gcd(2,|H|-1)\mid\kappa$. It is proved that (i) if $\tau$ is even, then $H$ is a $(\kappa,\tau)$-regular set of $G$; (ii) if $\tau$ is odd, then $H$ is a $(\kappa,\tau)$-regular set of $G$ if and only if it is a $(0,1)$-regular set of $G$.

\textsf{Keywords:}~regular set; perfect code;  Cayley graph

\medskip
{\em AMS Subject Classification (2020):} 05C25, 05E18, 94B25
\end{abstract}
\section{Introduction}
In the paper, all groups considered are finite groups with identity element denoted
as $1$, and all graphs considered are finite, undirected and simple. Let $R$ be a subset of the vertex set of a graph $\Ga$, and $\kappa$ and $\tau$ a pair of nonnegative integers. We call $R$ a $(\kappa,\tau)$-\emph{regular set} (or \emph{regular set} for short if there is no need to emphasize the parameters $\kappa$ and $\tau$ in the context) of $\Gamma$ if every vertex in $R$ is adjacent to exactly $\kappa$ vertices in $R$ and every vertex outside $R$ is adjacent to exactly $\tau$ vertices in $R$. In particular, we call $R$ a perfect code of $\Gamma$ if $(\kappa,\tau)=(0,1)$ and a total perfect code of $\Gamma$ if $(\kappa,\tau)=(1,1)$.
The concept of $(\kappa,\tau)$-\emph{regular set} was introduced in \cite{CR2004} and further studied in \cite{AC2013,Car2019, CR2007,CS2010}. Very recently, regular sets in Cayley Graphs was studied in \cite{WXZ2022,WXZ2022+}.

Let $G$ be a group and $X$ an inverse closed subset of $G\setminus\{1\}$. The Cayley graph $\Cay(G,X)$ on $G$ with connection set $X$ is the graph with vertex set $G$ and edge set $\{\{g,gx\}\mid g\in G, x\in X\}$. A subset $R$ of $G$ is called a $(\kappa,\tau)$-\emph{regular set} of $G$ if there is a Cayley graph $\Gamma$ on $G$ such that $R$ is a $(\kappa,\tau)$-regular set of $\Gamma$.
Regular sets of Cayley graphs are closely related to codes of groups.
Let $C$ and $Y$ be two subsets of $G$ and $\lambda$ a positive integer. If for every $g\in G$ there exist  precisely $\lambda$ pairs $(c,y)\in C\times Y$ such that $g=cy$, then $C$ is called a \emph{code} of $G$ with respect to $Y$ \cite{GL2020}.
In particular, if $\lambda=1$ and $Y$ is an inverse closed subset of $G$ containing $1$,
then $C$ is called a \emph{perfect code} of $G$ \cite{HXZ2018}.
Let $H$ be a subgroup of $G$. It is straightforward to check that $H$ is a $(0,\tau)$-regular set of $G$ if and only if $H$ is a code of $G$ with respect to some inverse closed subset of $G$. Actually, if $H$ is a $(0,\tau)$-regular set of the Cayley graph $\Cay(G,X)$, then $H$ is a code of $G$ with respect to $Y:=X\cup Z$ for any inverse closed subset $Z$ of $H$ with cardinality $\tau$; and on the other hand, if $H$ is a code of $G$ with respect to $Y$, then $H$ is a $(0,\tau)$-regular set of the Cayley graph $\Cay(G,X)$
where $X=Y\setminus H$ and $\tau=\frac{\vert H\vert\vert Y\vert}{|G|}$.

It is a natural question when a normal subgroup of a group is a regular set. This question was studied by Wang et al in \cite{WXZ2022+}. They proved that, for any finite group $G$, if a non-trivial normal subgroup $H$ of $G$ is a perfect code of $G$, then for any pair of integers $\kappa$ and $\tau$ with $0\leq\kappa\leq|H|-1$, $1\leq\tau\leq|H|$ and  $\gcd(2,|H|-1)\mid\kappa$, $H$ is also a $(\kappa,\tau)$-regular set of $G$. It was also shown in \cite{WXZ2022+} that there exists normal subgroups of some group which are $(\kappa,\tau)$-regular sets for some pair of integers $\kappa$ and $\tau$  but not perfect codes of the group. In this paper, we extend the mains results in \cite{WXZ2022+} by proving the following theorem.
\begin{theorem}
\label{main}
Let $G$ be a group and $H$ a non-trivial normal subgroup of $G$. Let $\kappa$ and $\tau$ be a pair of integers satisfying $0\leq\kappa\leq|H|-1$, $1\leq\tau\leq|H|$ and $\gcd(2,|H|-1)\mid\kappa$. The following two statements hold:
\begin{enumerate}
  \item if $\tau$ is even, then $H$ is a $(\kappa,\tau)$-regular set of $G$;
  \item if $\tau$ is odd, then $H$ is a $(\kappa,\tau)$-regular set of $G$ if and only if it is a perfect code of $G$.
\end{enumerate}
\end{theorem}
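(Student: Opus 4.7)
The plan is to reformulate $(\kappa,\tau)$-regularity of $H$ as the existence of a suitable Cayley connection set, then build one (or refute its existence). Given any inverse-closed $X\subseteq G\setminus\{1\}$, a vertex $g$ of $\Cay(G,X)$ has exactly $|X\cap g^{-1}H|$ neighbours in $H$; normality of $H$ and the fact that $g^{-1}H$ ranges over all cosets of $H$ as $g$ ranges over $G$ yield the equivalence: $H$ is a $(\kappa,\tau)$-regular set of $G$ if and only if there is an inverse-closed $X\subseteq G\setminus\{1\}$ with $|X\cap H|=\kappa$ and $|X\cap C|=\tau$ for every non-trivial coset $C$ of $H$. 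The entire proof is about this combinatorial existence problem.

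The key structural input concerns \emph{self-inverse} cosets, i.e., those $C$ with $C^{-1}=C$. Letting $i_C$ denote the number of involutions of $G$ lying in $C$, the non-involutions of $C$ come in inverse pairs, so $|H|-i_C$ is even. When $|H|$ is odd this forces $i_C$ to be odd, and in particular positive, in every self-inverse non-trivial coset; thus $H$ is automatically a perfect code of $G$ and the theorem reduces to the Wang--Xia--Zhou result cited in the excerpt. The only essentially new case is therefore $|H|$ even, where the hypothesis $\gcd(2,|H|-1)\mid\kappa$ becomes vacuous and $i_C$ may equal $0$.

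To realise the set $X$ I proceed coset by coset. For each pair $\{C,C^{-1}\}$ of distinct non-trivial cosets, I put any $\tau$-subset of $C$ into $X$ together with its inverses in $C^{-1}$. For each self-inverse non-trivial coset $C$ the piece $X\cap C$ must itself be inverse-closed of size $\tau$, hence must consist of $a$ involutions and $b$ inverse pairs of non-involutions with $a+2b=\tau$, $0\le a\le i_C$ and $0\le 2b\le|H|-i_C$; a direct parity check, using that $|H|-i_C$ is even, shows such $(a,b)$ exists whenever $\tau$ is even, and whenever $\tau$ is odd provided $i_C\ge 1$. The analogous analysis for $X\cap H$, now under the extra constraint $1\notin X$ so that $2b\le|H|-1-i_H$, is exactly governed by $\gcd(2,|H|-1)\mid\kappa$ and always admits a valid $(a,b)$. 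This proves (i) and the backward direction of (ii), the latter because perfect-code status of $H$ forces $i_C\ge 1$ in every self-inverse non-trivial coset.

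For the forward direction of (ii), assume $\tau$ is odd and a witnessing $X$ exists. For every self-inverse non-trivial coset $C$, the set $X\cap C$ is inverse-closed of odd size $\tau$, so contains an odd and hence positive number of self-inverse elements; these are involutions since $1\notin C$. Picking one such involution from each self-inverse non-trivial coset, together with a pair $\{g,g^{-1}\}$ inside each pair $\{gH,g^{-1}H\}$ of distinct cosets, produces an inverse-closed transversal of the non-trivial cosets of $H$, realising $H$ as a perfect code. The main obstacle is the parity-and-bounds bookkeeping in the third paragraph: the existence of a valid $(a,b)$ must be verified in a handful of boundary situations (such as $i_C\in\{0,|H|\}$ or $\kappa\in\{0,|H|-1\}$), and the asymmetric constraint $1\notin X$ on $X\cap H$ has to be handled separately, which is precisely where the divisibility hypothesis $\gcd(2,|H|-1)\mid\kappa$ is used.
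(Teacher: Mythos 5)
Your proposal is correct and follows essentially the same route as the paper: the paper's Lemmas \ref{eventau}--\ref{0kappa} carry out exactly your coset-by-coset construction (splitting cosets into self-inverse ones and inverse pairs, counting involutions, and doing the parity bookkeeping with $a$ involutions and $b$ inverse pairs), and its proof of the necessity in Lemma \ref{oddtau} is your observation that an inverse-closed set of odd size in a self-inverse non-trivial coset must contain an involution. The only cosmetic difference is that the paper handles the reduction from $(\kappa,\tau)$ to $(0,\tau)$ as a separate lemma and cites Wang--Xia--Zhou for the sufficiency direction of (ii), which you instead prove directly via $i_C\geq 1$.
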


It was proved in \cite[Theorem 2.2]{HXZ2018} that a normal subgroup $H$ of $G$ is a perfect code of $G$ if and only if
\begin{enumerate}
  \item[\#] for any $g\in G$ with $g^{2}\in H$, there exists $h\in H$ such that $(gh)^2=1$.
\end{enumerate}
Note that condition \# always holds if $H$ is of odd order or odd index \cite[Corollary 2.3]{HXZ2018}.
Therefore, Theorem \ref{main} has a direct corollary as follows.
\begin{coro}
\label{odd}
Let $G$ be a group and $H$ a non-trivial normal subgroup of $G$. If either $|H|$ or $|G/H|$ is odd, then $H$ is a $(\kappa,\tau)$-regular set of $G$ for every pair of integers $\kappa$ and $\tau$ satisfying $0\leq\kappa\leq|H|-1$, $1\leq\tau\leq|H|$ and $\gcd(2,|H|-1)\mid\kappa$.
\end{coro}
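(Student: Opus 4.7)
The plan is to deduce the corollary by combining Theorem~\ref{main} with the cited characterization of normal perfect codes. Since the conclusion of Theorem~\ref{main} splits according to the parity of $\tau$, I would treat the two cases separately.

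If $\tau$ is even, then Theorem~\ref{main}(i) applies directly and yields that $H$ is a $(\kappa,\tau)$-regular set of $G$, regardless of whether $|H|$ or $|G/H|$ is odd. So this case needs nothing beyond quoting the theorem, and in particular the odd-order/odd-index hypothesis is not used.

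If $\tau$ is odd, then Theorem~\ref{main}(ii) reduces the problem to showing that $H$ is a perfect code of $G$. By the characterization \cite[Theorem~2.2]{HXZ2018} recorded just before the corollary as condition~(\#), I must verify that for every $g\in G$ with $g^{2}\in H$ there exists $h\in H$ with $(gh)^{2}=1$. Under the standing hypothesis that $|H|$ or $|G/H|$ is odd, this is exactly the statement of \cite[Corollary~2.3]{HXZ2018}, which the paragraph above the corollary explicitly cites. Invoking that result then completes this case.

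I do not foresee any genuine obstacle: the corollary is essentially a repackaging of Theorem~\ref{main} together with the known fact that a normal subgroup of odd order or odd index automatically satisfies condition~(\#) and is therefore a perfect code. The only point requiring a little care in the write-up is to make clear that the parity hypothesis on $|H|$ or $|G/H|$ is used only in the $\tau$-odd case, since part~(i) of Theorem~\ref{main} already handles every even $\tau$ without any such assumption.
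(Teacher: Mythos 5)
Your proposal is correct and matches the paper's own (implicit) argument exactly: the paper likewise derives the corollary by noting that when $|H|$ or $|G/H|$ is odd, condition~(\#) holds by \cite[Corollary~2.3]{HXZ2018}, so $H$ is a perfect code by \cite[Theorem~2.2]{HXZ2018}, and then Theorem~\ref{main} covers both parities of $\tau$. Your added observation that the parity hypothesis is only needed in the $\tau$-odd case is accurate and a fine point of emphasis, but not a departure from the paper's route.
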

\begin{remark*}
It is a challenging question whether Theorem \ref{main} and Corollary \ref{odd} can be generalized to non-normal subgroups $H$ of $G$.
\end{remark*}
\begin{remark*}
Let $H$ be a nontrivial normal subgroup of $G$ of even order not satisfying condition \#. Let $\kappa$ and $\tau$ be a pair of integers satisfying $0\leq\kappa\leq|H|-1$, $2\leq\tau\leq|H|$ and  $2\mid\tau$. Then Theorem \ref{main} (i) and \cite[Theorem 2.2]{HXZ2018} imply that $H$ is a $(\kappa,\tau)$-regular set but not a perfect code of $G$.
\end{remark*}
\section{Proof of Theorem \ref{main}}
Throughout this section, we use $\dot\cup_{i=1}^{n}S_{i}$ to denote the union of the pair-wise disjoint sets $S_1,S_2,\ldots,S_n$. Let $G$ be a group and $H$ a non-trivial normal subgroup of $G$. Let $\kappa$ and $\tau$ be a pair of integers satisfying $0\leq\kappa\leq|H|-1$, $1\leq\tau\leq|H|$ and $\gcd(2,|H|-1)\mid\kappa$. We firstly prove three lemmas and then complete the proof of Theorem \ref{main}.

\begin{lem}
\label{eventau}
If $\tau$ is even, then $H$ is a $(0,\tau)$-regular set of $G$.
\end{lem}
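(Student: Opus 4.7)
The plan is to exhibit an inverse-closed subset $X \subseteq G \setminus H$ (with $1 \notin X$) such that $|X \cap C| = \tau$ for every non-trivial coset $C$ of $H$ in $G$; then $H$ will be a $(0,\tau)$-regular set of $\Cay(G,X)$. Indeed $X \cap H = \emptyset$ forces every vertex of $H$ to have $0$ neighbours in $H$, while for $g \notin H$ the neighbours of $g$ in $H$ biject with $X \cap g^{-1}H$; normality of $H$ says $g^{-1}H$ is the coset containing $g^{-1}$, and this ranges over all non-trivial cosets of $H$ as $g$ does, so every $g \notin H$ will have exactly $\tau$ neighbours in $H$.

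I would build $X$ coset-by-coset after grouping the non-trivial cosets into pairs $\{C, C^{-1}\}$. For a pair with $C \neq C^{-1}$, pick any $\tau$-subset $T \subseteq C$ (feasible since $\tau \le |H| = |C|$) and contribute $T \cup T^{-1}$ to $X$; this adds exactly $\tau$ elements to each of $C$ and $C^{-1}$, and the contribution is inverse-closed by construction.

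The delicate case is a self-inverse coset $C = C^{-1}$. Here the map $\iota \colon c \mapsto c^{-1}$ is an involution of $C$ whose fixed points are precisely the involutions of $G$ lying in $C$. Setting $k = |\Fix(\iota)|$, the non-fixed elements of $C$ split into orbits of size $2$, so $|H| - k$ is even and in particular $k \equiv |H| \pmod 2$. An inverse-closed $\tau$-subset of $C$ amounts to choosing $s$ fixed points and $p$ orbits of size $2$ with $s + 2p = \tau$, $0 \le s \le k$, and $0 \le 2p \le |H| - k$. I would take $s = \max\{0,\, \tau - (|H| - k)\}$ and $p = (\tau - s)/2$; the parity congruence together with $\tau$ even force $s$ to be an even non-negative integer, while $\tau \le |H|$ yields the upper bounds. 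Collecting these contributions over all cosets produces the required $X$.

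The only non-trivial input is the parity identity $k \equiv |H| \pmod 2$, which ensures that the chosen $s$ is always even so that $p$ is an integer. This is precisely where the hypothesis that $\tau$ is even is exploited, and it is the main (mild) obstacle in the argument; the rest is routine bookkeeping.
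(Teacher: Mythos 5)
Your proposal is correct and follows essentially the same route as the paper: both arguments build $X$ coset by coset, pair up the non-self-inverse cosets $\{C,C^{-1}\}$ by taking an arbitrary $\tau$-subset and its inverse, and handle self-inverse cosets via the parity observation that the number of involutions in such a coset is congruent to $|H|$ modulo $2$, so that an inverse-closed $\tau$-subset exists when $\tau$ is even. The only difference is cosmetic bookkeeping (you minimize the number of involutions used, the paper maximizes it), which does not change the argument.
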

\begin{proof}
Let $A:=\{1,a_{1},\ldots,a_{s}\}$ be a left transversal of $H$ in $G$. Assume that the number of involutions contained in $a_{i}H$ is $n_{i}$ for every $1\leq i\leq s$. Let $\sigma$ be a permutation on $\{1,\ldots,s\}$ such that $a_{i}^{-1}H=a_{\sigma(i)}H$. Since $H$ is normal in $G$, we have
\begin{equation*}
a_{\sigma^{2}(i)}H=a_{\sigma(i)}^{-1}H=Ha_{\sigma(i)}^{-1}
=(a_{\sigma(i)}H)^{-1}=(a_{i}^{-1}H)^{-1}=Ha_{i}=a_{i}H.
\end{equation*}
It follows that $\sigma$ is the identity permutation or an involution. Assume that $\sigma$ fixes $t$ integers in $\{1,\ldots,s\}$. Then $0\leq t\leq s$ and $s-t$ is even. Set $\ell:=\frac{s-t}{2}$. Without loss of generality, we assume that
\begin{equation*}
\sigma(i)=\left\{
                   \begin{array}{ll}
                     i, & \hbox{if}~i\leq t; \\
                     i+\ell, & \hbox{if}~t<i\leq t+\ell;\\
                     i-\ell, & \hbox{if}~t+\ell<i\leq s.\\
                   \end{array}
                 \right.
\end{equation*}
Then $a_iH$ is inverse closed if $i\leq t$ and $(a_{t+j}H)^{-1}=a_{t+j+\ell}H$ for every positive integer $j$ not greater than $\ell$. In particular, $n_{i}=0$ if $i>t$.
For every $i\in\{1,\ldots,s\}$, take a subset $X_i$ of $a_iH$ of cardinality $\tau$ by the following rules:
\begin{itemize}
  \item if $i\leq t$ and $n_i\geq \tau$, then $X_i$ consists of exactly $\tau$ involutions;
  \item if $i\leq t$, $n_i<\tau$ and $\tau-n_i$ is even, then $X_i$ consists of $n_i$ involutions and $\frac{\tau-n_i}{2}$ pairs of mutually inverse elements of order grater than $2$;
 \item if $i\leq t$, $n_i<\tau$ and $\tau-n_i$ is odd, then $X_i$ consists of $n_i-1$ involutions and $\frac{\tau+1-n_i}{2}$ pairs of mutually inverse elements of order grater than $2$;
  \item if $t<i\leq t+\ell$, then $X_i$ consists of exactly $\tau$ elements of order greater than $2$;
  \item if $i> t+\ell$, then set $X_i=X_{i-\ell}^{-1}$.
\end{itemize}
Note that $X_1,\ldots,X_s$ are pair-wise disjoint. Set $X=\dot\cup_{i=1}^{s}X_{i}$. Then $X$ is an inverse closed subset of $G$ satisfying $X\cap H=\emptyset$ and $\vert X\cap gH\vert=\tau$ for every $g\in G\setminus H$. It follows that $H$ is a $(0,\tau)$-regular set of the Cayley graph $\Cay(G,X)$ and therefore a $(0,\tau)$-regular set of $G$.
\end{proof}
\begin{lem}
\label{oddtau}
If $\tau$ is odd, then $H$ is a $(0,\tau)$-regular set of $G$ if and only if it is a perfect code of $G$.
\end{lem}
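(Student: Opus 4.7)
The plan is to prove both directions of the equivalence, leveraging the coset-by-coset construction used in Lemma \ref{eventau} together with condition \#.

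For the forward direction, suppose $H$ is a $(0,\tau)$-regular set of some Cayley graph $\Cay(G,X)$, so $X$ is an inverse closed subset of $G\setminus H$ with $|X\cap gH|=\tau$ for every $g\in G\setminus H$. I want to verify condition \#. Pick any $g\notin H$ with $g^2\in H$, so the coset $gH$ is self-inverse and $X\cap gH$ is an inverse closed subset of $gH$ of odd cardinality $\tau$. The non-involutions in $gH$ partition into inverse pairs, and each such pair lies either wholly inside or wholly outside $X\cap gH$ because $X$ is inverse closed. Consequently $X\cap gH$ must contain an odd (in particular nonzero) number of involutions, and any such involution has the form $gh$ with $h\in H$ and $(gh)^2=1$. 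By \cite[Theorem 2.2]{HXZ2018}, this shows $H$ is a perfect code.

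For the reverse direction, I would adapt the construction of Lemma \ref{eventau}. Keep the same transversal $\{1,a_1,\ldots,a_s\}$, the same involution $\sigma$ on $\{1,\ldots,s\}$, and the same split into self-inverse cosets ($i\leq t$) and coset pairs ($t<i\leq s$). For the coset pairs, take $X_i$ to consist of $\tau$ elements of order greater than $2$ in $a_iH$ when $t<i\leq t+\ell$ and set $X_{i+\ell}=X_i^{-1}$. For each self-inverse coset $a_iH$ with $i\leq t$, condition \# forces $n_i\geq 1$; since $\tau$ is odd, any inverse closed subset of $a_iH$ of size $\tau$ must contain an odd number of involutions. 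I would therefore select $X_i$ by a three-case rule parallel to Lemma \ref{eventau}: take $\tau$ involutions if $n_i\geq\tau$; take $n_i$ involutions together with $(\tau-n_i)/2$ inverse pairs of non-involutions if $n_i<\tau$ and $n_i$ is odd; take $n_i-1$ involutions together with $(\tau-n_i+1)/2$ inverse pairs if $n_i<\tau$ and $n_i$ is even. Setting $X:=\dot\cup_{i=1}^{s}X_i$ then produces an inverse closed subset of $G\setminus H$ with $|X\cap gH|=\tau$ for every $g\in G\setminus H$, so $H$ is a $(0,\tau)$-regular set of $\Cay(G,X)$.

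The main obstacle is verifying that the three selections above are always feasible, i.e., that enough involutions and enough inverse pairs of non-involutions exist in each self-inverse coset. The only potentially troublesome configuration is $n_i$ even with $\tau=|H|$, since then $(\tau-n_i+1)/2$ might exceed the available $(|H|-n_i)/2$. However, the parity identity $n_i\equiv|H|\pmod 2$ (forced by non-involutions pairing up under inversion) shows that $\tau=|H|$ odd entails $n_i$ odd, so this configuration never arises; all other selections are routinely valid once $\tau\leq|H|$ and $n_i\geq 1$.
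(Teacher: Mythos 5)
Your proof is correct, but it takes a genuinely different route from the paper in both directions. For the necessity, the paper never invokes condition \#: it directly manufactures an inverse-closed transversal $\{1,y_1,\ldots,y_s\}$ of $H$ by choosing one element $y_i$ from each block $X_i=X\cap a_iH$ (an involution when $X_i^{-1}=X_i$, which exists by the same parity observation you make, and mutually inverse choices across the cosets swapped by inversion), thereby exhibiting $H$ as a perfect code straight from the definition of a code of $G$. Your version uses the identical parity fact (odd $\tau$ forces an involution $gh$ in $X\cap gH$ for a self-inverse coset) but only in the self-paired cosets, and then routes through the characterization in \cite[Theorem 2.2]{HXZ2018}; that is shorter, at the cost of leaning on the external criterion, and the only (trivial) detail to add is that condition \# must also be checked for $g\in H$, where $h=g^{-1}$ works. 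For the sufficiency, the paper simply cites \cite[Theorem 1.2]{WXZ2022+}, whereas you give a self-contained construction modelled on Lemma \ref{eventau}; your feasibility analysis is sound, and in particular you correctly isolate and dispose of the one problematic configuration ($n_i$ even with $\tau=|H|$) via the congruence $n_i\equiv|H|\pmod 2$ valid in a self-inverse coset, while condition \# supplies $n_i\geq 1$ there. Net effect: your argument is self-contained where the paper outsources, and outsources where the paper is self-contained; both are valid proofs of the lemma.
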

\begin{proof}
The sufficiency follows from \cite[Theorem 1.2]{WXZ2022+}. Now we prove the necessity. Let $H$ be a $(0,\tau)$-regular set of the Cayley graph $\Cay(G,X)$. Then $X=X^{-1}$,
$X\cap H=\emptyset$ and $\vert X\cap gH\vert=\tau$ for every $g\in G\setminus H$. Let $A:=\{1,a_{1},\ldots,a_{s}\}$ be a left transversal of $H$ in $G$ and set $X_i=X\cap a_iH$ for every $i\in\{1,2,\ldots,s\}$. Then $X=\dot\cup_{i=1}^{s}X_{i}$. If $X_{i}$ contains an involution for each $i\in\{1,\ldots,s\}$, then $H$ is a perfect code of $G$ with respect to $\{1,y_1,\ldots,y_s\}$ where $y_i$ is an involution in $X_i$, $i=1,\ldots,s$. Now we assume that there exists at least one integer $k\in \{1,\ldots,s\}$ such that $X_{k}$ contains no involution. Then $x^{-1}\neq x$ for every element $x\in X_{k}$. It follows that $|X_{k}\cup X_{k}^{-1}|$ is even. Since $|X_{k}|=\tau$ and $\tau$ is odd, we get $X_k\neq X_k^{-1}$. Since $H$ is normal in $G$, we obtain $(a_kH)^{-1}=(Ha_k)^{-1}=a_k^{-1}H$. Assume that $a_k^{-1}H=a_{j}H$ for some $j\in \{1,\ldots,s\}$. Then $X_{k}^{-1}\subseteq a_{j}H$.
Since $X=\dot\cup_{i=1}^{s}X_{i}$ and $X^{-1}=X$, we conclude that $X_k^{-1}=X_{j}$. Therefore, without loss of generality, we can assume that $X_i^{-1}=X_{i+\ell}$ if $1\leq i\leq\ell$ and $X_i^{-1}=X_{i}$ if $2\ell< i\leq s$ where $\ell$ is a positive integer not greater than $\frac{s}{2}$. Note that $X_{i}$ contains at least one involution if $X_i^{-1}=X_{i}$ (as it is of odd cardinality).
For every $i\in\{1,\ldots,s\}$, take an element $y_i\in X_i$ by the following rules:
\begin{itemize}
  \item $y_i$ is an arbitrary element in $X_i$ if $i\leq \ell$;
  \item $y_i=y_{i-\ell}^{-1}$ if $\ell<i\leq 2\ell$;
  \item $y_i$ is an involution if $i>2\ell$.
\end{itemize}
Then $H$ is a perfect code of $G$ with respect to $\{1,y_1,\ldots,y_s\}$.
\end{proof}

\begin{lem}
\label{0kappa}
$H$ is a $(\kappa,\tau)$-regular set of $G$ if and only if $H$ is a $(0,\tau)$-regular set of $G$.
\end{lem}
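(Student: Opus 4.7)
The plan is to observe that passing between a $(0,\tau)$-regular certificate and a $(\kappa,\tau)$-regular certificate only requires modifying the connection set inside $H$, so the two properties differ only in what one puts inside $H$.

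For the necessity direction, I would take a Cayley graph $\Cay(G,X)$ witnessing that $H$ is a $(\kappa,\tau)$-regular set and set $X_{0}=X\setminus H$. Since both $X$ and $H$ are inverse closed, so is $X_{0}$; clearly $X_{0}\cap H=\emptyset$, and for every $g\in G\setminus H$ we have $gH\cap H=\emptyset$, hence $gH\cap X_{0}=gH\cap X$, which has cardinality $\tau$. This shows that $\Cay(G,X_{0})$ witnesses $H$ as a $(0,\tau)$-regular set.

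For the sufficiency direction, starting from a Cayley graph $\Cay(G,X_{0})$ witnessing the $(0,\tau)$-regularity, the aim is to build $X=X_{0}\,\dot\cup\,Y$ where $Y\subseteq H\setminus\{1\}$ is inverse closed with $|Y|=\kappa$; then $X$ is inverse closed, $X\cap H=Y$ yields that every vertex of $H$ has exactly $\kappa$ neighbors inside $H$, and $X\cap gH=X_{0}\cap gH$ still has cardinality $\tau$ for every $g\notin H$. The whole content therefore reduces to producing such a $Y$.

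The existence of $Y$ is the main point, and it is handled by a short parity analysis. Split $H\setminus\{1\}$ into the set $I$ of involutions (of size $m$) and $|H|-1-m$ elements of larger order partitioned into $n$ inverse pairs. When $|H|$ is odd one has $m=0$, the parity hypothesis $\gcd(2,|H|-1)\mid\kappa$ forces $\kappa$ even, and one simply picks $\kappa/2$ inverse pairs; this is the only case that needs the divisibility assumption. When $|H|$ is even, Cauchy's theorem gives $m\geq 1$ and $m$ is odd (since $|H|-1=m+2n$ is odd), so both parities of $a$ appear in $\{0,1,\ldots,m\}$; choose $a\in\{0,1,\ldots,m\}$ with $a\equiv\kappa\pmod 2$ and $a\geq \kappa-2n$ (possible since $\kappa\leq|H|-1=m+2n$), and let $Y$ consist of $a$ involutions together with $(\kappa-a)/2$ inverse pairs. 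The minor obstacle is precisely this last parity-matching in the even case, but it is settled by the oddness of $m$; once $Y$ is in hand, the verification above closes the proof.
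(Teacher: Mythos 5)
Your proposal is correct and follows essentially the same route as the paper: for necessity strip $X\cap H$ from the connection set, and for sufficiency adjoin an inverse-closed subset of $H\setminus\{1\}$ of size $\kappa$ assembled from involutions and inverse pairs, with the same parity bookkeeping (the paper cases on $\kappa$ versus the number of non-involutions instead of on the parity of $|H|$, but the content is identical). The only nit is that your chosen $a$ must also satisfy $a\leq\kappa$ so that $(\kappa-a)/2\geq 0$; such an $a$ exists by the same reasoning, e.g.\ $a=\min(\kappa,m)$ or $\min(\kappa,m)-1$ according to parity.
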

\begin{proof}
$\Rightarrow$) Let $H$ be a $(\kappa,\tau)$-regular set of the Cayley graph $\Cay(G,X)$. Then $|H\cap X|=\kappa$ and $|gH\cap X|=\tau$ for every $g\in G\setminus H$. Set $Y=X\setminus H$. Then $|H\cap Y|=0$ and $|gH\cap Y|=\tau$ for every $g\in G\setminus H$.  Since $X^{-1}=X$ and $H^{-1}=H$, we get $Y^{-1}=Y$. It follows that $H$ is a $(0,\tau)$-regular set of the Cayley graph $\Cay(G,Y)$ and therefore a $(0,\tau)$-regular set of $G$.

$\Leftarrow$) Let $H$ be a $(0,\tau)$-regular set of the Cayley graph $\Cay(G,Y)$. Then $|H\cap Y|=0$ and $|gH\cap Y|=\tau$ for every $g\in G\setminus H$. Let $m$ be the number of elements contained in $H$ of order greater than $2$. Then $m$ is even and the number of involutions contained in $H$ is $|H|-1-m$. Recall that $0\leq\kappa\leq|H|-1$ and $\gcd(2,|H|-1)\mid\kappa$. If $\kappa$ is odd, then $|H|$ is even and therefore contains at least one involution. Take an inverse closed subset $Z$ of $H$ of cardinality $\kappa$ by the following rules:
\begin{itemize}
  \item if $m\geq\kappa$ and $\kappa$ is even, then $Z$ consists of exactly $\frac{\kappa}{2}$ pairs of mutually inverse elements of order grater than $2$;
  \item if $m\geq\kappa$ and $\kappa$ is odd, then $Z$ consists of $\frac{\kappa-1}{2}$ pairs of mutually inverse elements of order grater than $2$ and one involution;
  \item if $m<\kappa$, then $Z$ consists of $\frac{m}{2}$ pairs of mutually inverse elements of order grater than $2$ and $\kappa-m$ involutions.
\end{itemize}
Set $X=Y\cup Z$. Then $|H\cap X|=\kappa$ and $|gH\cap X|=\tau$ for every $g\in G\setminus H$.  Therefore $H$ is a $(\kappa,\tau)$-regular set of the Cayley graph $\Cay(G,X)$ and therefore a $(\kappa,\tau)$-regular set of $G$.
\end{proof}
\begin{proof}[Proof of Theorem \ref{main}] Lemma \ref{eventau} and Lemma \ref{0kappa} imply that $H$ is a $(\kappa,\tau)$-regular set of $G$ if $\tau$ is even. Now assume $\tau$ is odd. By Lemma \ref{oddtau} and Lemma \ref{0kappa}, $H$ is a $(\kappa,\tau)$-regular set of $G$ if and only if it is a perfect code of $G$.
\end{proof}

\medskip
\noindent {\textbf{Acknowledgements}}~~
The first author was supported by Natural Science Foundation of Chongqing (cstc2019jcyj-msxmX0146) and the Foundation of Chongqing Normal University (21XLB006).
{\small \baselineskip=10pt
}
\end{document}